\newtheorem{thm}{Theorem}[section]
\newtheorem{defi}{Definition}[section]
\newtheorem{lem}{Lemma}[section]
\theoremstyle{definition}
\newtheorem{rem}{Remark}[section]
\newtheorem{exmp}{Example}[section]
\newcommand{\be}{\begin{equation}}
\newcommand{\ee}{\end{equation}}
\newcommand{\bea}{\begin{eqnarray}}
\newcommand{\eea}{\end{eqnarray}}
\newcommand{\beb}{\begin{eqnarray*}}
\newcommand{\eeb}{\end{eqnarray*}}
\newcommand{\norm}[1]{\left\lVert#1\right\rVert}
\numberwithin{equation}{section}
\begin{document}
\title[ON $\mathcal I$ AND $\mathcal I^*$-EQUAL CONVERGENCE IN LINEAR $2$-NORMED SPACES]{ON $\mathcal I$ AND $\mathcal I^*$-EQUAL CONVERGENCE IN LINEAR 2-NORMED SPACES}

\author[A.K.Banerjee]{Amar Kumar Banerjee$^1$ }
\author[N.Hossain]{Nesar Hossain$^2$}

\address{$^{1,2,}$Department of Mathematics, The University of Burdwan, Golapbag, Burdwan - 713104, West Bengal, India.}

\email{$^1$akbanerjee@math.buruniv.ac.in; akbanerjee1971@gmail.com}
\email{$^2$nesarmath94@gmail.com; nesarhossain24@gmail.com}

\subjclass[2020]{40A35, 40A30, 54A20, 40A05}
\keywords{Ideal, filter, linear $2$-normed spaces, $\mathcal{I}$-equal convergence, $AP$-condition, $\mathcal{I^*}$-equal convergence}

\maketitle

\begin{abstract}
 In this paper we study the notion of $\mathcal{I}$ and $\mathcal{I^*}$-equal convergence in linear 2-normed spaces and some of their properties. We also establish the relationship between them.
\end{abstract}

\section{\bf{Introduction}}

The idea of usual convergence of a real sequence was extended to statistical convergence independently by Fast \cite{Fast} and Steinhaus \cite{Steinhaus} in the year $1951$. Lot of developments were made on this notion of convergence after the pioneering works of Šalát \cite{Salat} and Fridy \cite{Fridy}. After long fifty years, the concept of statistical convergence was extended to the idea of $\mathcal{I}$-convergence depending on the structure of ideals $\mathcal{I}$ of $\mathbb{N}$, the set of natural numbers, by Kostyrko et al. \cite{Kostyrko Salat Wilczynski}. Throughout the paper $\mathbb{N}$ and $\mathbb{R}$ denote the set of all positive integers and the set of all real numbers respectively. The ideal $\mathcal{I}$ is a collection of subsets of $\mathbb{N}$ which is closed under finite union and with the condition that $B\in \mathcal{I}$ whenever $B\subset A\in \mathcal{I}$. Indeed, the concept of $\mathcal{I}$- convergence of real sequences is a generalization of statistical convergence which is based on the structure of the ideal $\mathcal{I}$ of subsets of $\mathbb{N}$. $\mathcal{I}$-convergence of real sequences coincides with the ordinary convergence if $\mathcal{I}$ is the ideal of all finite subsets of $\mathbb{N}$ and with the statistical convergence if $\mathcal{I}$ is the ideal of $\mathbb{N}$ of natural density zero. In \cite{Kostyrko Salat Wilczynski} the concept of $\mathcal{I^*}$-convergence was also introduced. Last few years several works on $\mathcal{I}$-convergence and its related areas were carried out in different directions in different spaces viz. metric spaces, normed linear spaces, probabilistic metric spaces, $S$-metric spaces, linear $2$-normed spaces, cone metric spaces, topological spaces etc. (see \cite{Banerjee Banerjee 1, Banerjee Banerjee 2, Banerjee Mondal, Lahiri Das} and many more references therein). Ordinary convergence always implies statistical convergence and when $\mathcal{I}$ is admissible ideal, $\mathcal{I^*}$-convergence implies $\mathcal{I}$-convergence. But the reverse implication does not hold in general. But when $\mathcal{I}$ satisfies the condition $(AP)$,  $\mathcal{I}$-convergence implies $\mathcal{I^*}$-convergence. A remarkable observation is that a statistically convergent sequence and $\mathcal{I}$ and $\mathcal{I^*}$-convergent saequence need not even be bounded.

Recently some significant investigations have been done on sequences of real valued functions by  using the idea of statistical and $\mathcal{I}$-convergence \cite{Caserta Maio Kocinac, Das  Dutta  Pal, Komisarski, Mrozek}. The interesting notion of equal convergence was introduced by Császár and Laczkovich \cite{Csaszar Laczkovich} for sequences of real valued functions (also known as quasinormal convergence \cite{Bukovska}). It is known that equal convergence is weaker than uniform convergence and stronger than pointwise converegence for the sequences of real valued functions. A detailed investigation was carried out by Császár and Laczkovich in \cite{Csaszar Laczkovich} on such type of convergence. In \cite{Das Dutta, Das Dutta Pal, Filipow Staniszewski} the concept of equal convergence of sequences of real functions was generalized to the ideas of $\mathcal{I}$ and $\mathcal{I^*}$-equal convergence using ideals of $\mathbb{N}$ and the relationship between them were investigated. $\mathcal{I}$-equal convergence is weaker than $\mathcal{I}$-uniform convergence and stronger than $\mathcal{I}$-pointwise convergence \cite{Das Dutta Pal}. 

The notion of linear $2$-normed spaces was initially introduced by Gahler \cite{Gahler} and since then the concept has been studied by many authors. In \cite{Sarabadan Talebi} some significant investigations on $\mathcal{I}$-uniform and $\mathcal{I}$-pointwise convergence have been studied in this space.

\section{\bf{Preliminaries}}
Now we recall some basic definitions and notations.  If $\mathcal{I}$ is non trivial proper ideal in a non empty set $X$ then the family of sets $\mathcal{F(I)} = \{A\subset X : X\setminus A\in\mathcal{I}\}$ is a filter on $X$ which is called the filter associated with the ideal $\mathcal{I}$. Throughout the paper $\mathcal{I}\subset 2^{\mathbb{N}}$ will stand for an admissible ideal.  

A sequence $\{x_n \}_{n\in\mathbb{N}}$ of real numbers is said to be $\mathcal{I}$-convergent to $x\in\mathbb{R}$ if for each $\varepsilon > 0$ the set $A(\varepsilon)=\{n \in\mathbb{N}: | {x_n - x}| \geq \varepsilon \}\in\mathcal{I}$.
The sequence $\{x_n \}_{n\in\mathbb{N}}$ of real numbers is said to be $\mathcal{I^*}$-convergent to $x\in\mathbb{R}$ if there exists a set $M=\{m_1 < m_2 < \cdots < m_k < \cdots\}\in\mathcal{F(I)}$ such that $x$ is the limit of the subsequence $\{x_{m_k} \}_{k\in\mathbb{N}}$ \cite{Kostyrko Salat Wilczynski}.

Let $f, f_n$ be real valued functions defined on a non empty set $X$. The sequence $\{f_n \}_{n\in\mathbb{N}}$ is said to be equally convergent (\cite{Csaszar Laczkovich}) to $f$ if there exists a sequence $\{\varepsilon _n \}_{n\in\mathbb{N}}$ of positive reals with $\lim_{n\to \infty }  \varepsilon _n = 0$ such that for every $x\in X$ there is $m = m(x)\in\mathbb{N}$ with $ |f_n (x) - f(x) | < \varepsilon _n$ for  $n\geq m$.
In this case we write $f_n \xrightarrow {e}  f$.

Now we see the key ideas of $\mathcal{I}$-uniform convergent \cite{Balcerzak Dems Komisarski} and $\mathcal{I}$ and $\mathcal{I^*}$-equal convergent \cite{Das Dutta Pal} sequences of real valued functions which will be needed for generalizations into linear $2$-normed spaces.

  A sequence $\{f_n \}_{n\in\mathbb{N}}$ is said to be $\mathcal{I}$-uniformly convergent to $f$ if for each $\varepsilon >0$ there exists a set $B \in\mathcal{I}$ such that for all $n\in B^c$ and for all $x\in X$,  $| f_{n} (x) - f(x) | < \varepsilon $. In this case we write $f_n \xrightarrow {\mathcal{I}-u} f$.
$f$ is called $\mathcal{I}$-equal limit of the sequence $\{f_n \}_{n\in\mathbb{N}}$ if there exists a sequence $\{\varepsilon _n \}_{n\in\mathbb{N}}$ of positive reals with $\mathcal{I}\text{-}\lim_{n\to \infty }  \varepsilon _n = 0$ such that for any $x\in X$, the set $\{n \in\mathbb{N} : |f_{n} (x) - f(x) | \geq \varepsilon _{n} \} \in\mathcal{I}$. In this case we write $f_n \xrightarrow {\mathcal{I}-e}f$.
 The sequence $\{f_n \}_{n\in\mathbb{N}}$ is said to be $\mathcal{I^*}$-equal convergent to $f$ if there exists a set $M=\{m_1 < m_2 < \cdots < m_k \cdots\} \in\mathcal{F(I)}$ such that $f$ is the equal limit of the subsequence $\{f_{m_k} \}_{k\in\mathbb{N} }$. In this case we write $f_n \xrightarrow {\mathcal{I^*}-e}f$.

Now we recall the following two important notions which are basically equivalent to each other (due to Lemma 3.9. and Definition 3.10. in \cite{Macaj Sleziak}).
Let $\mathcal{I}\subset 2^{\mathbb{N}}$ be an admissible ideal. $\mathcal{I}$ is called $P$-ideal if for every sequence of mutually disjoint sets $\{A_{1}, A_{2},\cdots\}$ belonging to $\mathcal{I}$ there exists a sequence  $\{B_{1}, B_{2},\cdots\}$ of sets belonging to $\mathcal{I}$ such that $A_{j} \triangle B_{j}$ is finite for $j \in\mathbb{N}$ and $B= \bigcup _{j \in\mathbb{N}} B_{j} \in\mathcal{I}$. This notion is also called $AP$-condition while in \cite{Macaj Sleziak} it is denoted as $AP(\mathcal{I}, Fin)$. An ideal $\mathcal{I}$ is a $P$-ideal if for any sets $A_{1}, A_{2},\cdots$ belonging to $\mathcal{I}$ there exists a set $A \in\mathcal{I}$ such that $A_{n} \setminus A$ is finite for $n \in\mathbb{N}$.

 Now we state some results from \cite{Kostyrko Macaj Salat} for the sequences of real numbers. 
 
\begin{thm}
  Suppose that $\{x_{n} \}_{n\in\mathbb{N}}$ be a sequence of real numbers and $\mathcal{I}$ is an admissible ideal in $\mathbb{N}$. If  $\mathcal{I^*}\text{-} \lim_{n\to \infty }  x _{n} = \xi $ then  $\mathcal{I}\text{-}\lim_{n\to \infty }  x _{n} = \xi $.
\end{thm}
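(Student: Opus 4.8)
The plan is to argue straight from the definitions, reducing the requirement of $\mathcal{I}$-convergence for an arbitrary $\varepsilon>0$ to membership in $\mathcal{I}$ of a set that is visibly small.

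First I would fix $\varepsilon>0$ and unwind the hypothesis $\mathcal{I^*}\text{-}\lim_{n\to\infty}x_n=\xi$: there is a set $M=\{m_1<m_2<\cdots<m_k<\cdots\}\in\mathcal{F(I)}$ such that the subsequence $\{x_{m_k}\}_{k\in\mathbb{N}}$ converges to $\xi$ in the ordinary sense. Hence we may choose $k_0\in\mathbb{N}$ with $|x_{m_k}-\xi|<\varepsilon$ for all $k>k_0$. Put $H=\mathbb{N}\setminus M$; since $M\in\mathcal{F(I)}$, by the definition of the associated filter we have $H\in\mathcal{I}$.

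The key step is the inclusion
$$A(\varepsilon)=\{n\in\mathbb{N}:|x_n-\xi|\geq\varepsilon\}\subseteq H\cup\{m_1,m_2,\ldots,m_{k_0}\}.$$
Indeed, if $n\in A(\varepsilon)$ and $n\notin H$, then $n\in M$, say $n=m_k$, and the choice of $k_0$ forces $k\leq k_0$, so $n\in\{m_1,\ldots,m_{k_0}\}$. Now, because $\mathcal{I}$ is admissible it contains every finite subset of $\mathbb{N}$, and being an ideal it is closed under finite unions; therefore $H\cup\{m_1,\ldots,m_{k_0}\}\in\mathcal{I}$, and the hereditary property of $\mathcal{I}$ yields $A(\varepsilon)\in\mathcal{I}$. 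As $\varepsilon>0$ was arbitrary, $\mathcal{I}\text{-}\lim_{n\to\infty}x_n=\xi$.

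I do not expect a genuine obstacle here; the only point demanding care is the explicit use of admissibility, which is precisely what makes the finite ``exceptional'' set $\{m_1,\ldots,m_{k_0}\}$ lie in $\mathcal{I}$ (and what makes $\mathcal{F(I)}$ a bona fide filter, so that $H\in\mathcal{I}$). Without that hypothesis the implication breaks down, so it must be invoked, not left implicit.
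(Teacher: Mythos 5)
Your proof is correct and is exactly the standard argument: the paper itself only cites this result from Kostyrko--Macaj--\v{S}al\'at without proof, but your decomposition $A(\varepsilon)\subseteq(\mathbb{N}\setminus M)\cup\{m_1,\ldots,m_{k_0}\}$, combined with admissibility and closure under finite unions and subsets, is precisely the argument the paper reproduces for its own analogue (Theorem \ref{thm5} on $\mathcal{I^*}$-equal versus $\mathcal{I}$-equal convergence). Nothing is missing; your explicit remark on where admissibility is used is a point the paper leaves implicit.
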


\begin{thm}\label{thm1}
$\mathcal{I}\text{-}\lim_{n\to \infty }  x _{n} = \xi $ implies $\mathcal{I^*}\text{-} \lim_{n\to \infty }  x _{n} = \xi $ if and only if $\mathcal{I}$ Satisfies the condition $AP$.
\end{thm}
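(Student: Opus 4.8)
The plan is to establish the two implications separately, using the sequential form of the $AP$-condition recorded above: for every countable family $\{A_1,A_2,\dots\}\subseteq\mathcal{I}$ there is $A\in\mathcal{I}$ with $A_n\setminus A$ finite for each $n$. For the sufficiency, assume $\mathcal{I}$ satisfies $AP$ and $\mathcal{I}\text{-}\lim_{n\to\infty}x_n=\xi$. For each $k\in\mathbb{N}$ I would set $A_k=\{n\in\mathbb{N}:|x_n-\xi|\ge 1/k\}$, which lies in $\mathcal{I}$ by the definition of $\mathcal{I}$-convergence. Applying $AP$ to the family $\{A_k\}_{k\in\mathbb{N}}$ yields a set $B\in\mathcal{I}$ with $A_k\setminus B$ finite for every $k$. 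Put $M=\mathbb{N}\setminus B\in\mathcal{F(I)}$; since $\mathcal{I}$ is admissible and proper, $B$ cannot be cofinite, so $M$ is infinite and may be written as $M=\{m_1<m_2<\cdots\}$. To check that $\lim_{j\to\infty}x_{m_j}=\xi$, fix $\varepsilon>0$, choose $k$ with $1/k<\varepsilon$, and let $N=\max(A_k\setminus B)$; then any $n\in M$ with $n>N$ cannot belong to $A_k$, hence $|x_n-\xi|<1/k<\varepsilon$. Therefore $\mathcal{I^*}\text{-}\lim_{n\to\infty}x_n=\xi$.

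For the necessity, assume the implication holds for every real sequence and let $A_1,A_2,\dots\in\mathcal{I}$ be arbitrary. First I would disjointify by putting $A_1'=A_1$ and $A_j'=A_j\setminus(A_1\cup\cdots\cup A_{j-1})$, so that the sets $A_j'$ are pairwise disjoint, still belong to $\mathcal{I}$, and satisfy $A_j\subseteq A_1'\cup\cdots\cup A_j'$ for every $j$. Define a test sequence $\{x_n\}_{n\in\mathbb{N}}$ by $x_n=1/j$ whenever $n\in A_j'$ and $x_n=0$ otherwise; this is well defined by disjointness. For any $\varepsilon>0$ the set $\{n\in\mathbb{N}:|x_n|\ge\varepsilon\}$ is contained in the finite union $A_1'\cup\cdots\cup A_{\lfloor 1/\varepsilon\rfloor}'$, which lies in $\mathcal{I}$, so $\mathcal{I}\text{-}\lim_{n\to\infty}x_n=0$. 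By hypothesis $\mathcal{I^*}\text{-}\lim_{n\to\infty}x_n=0$, so there is $M\in\mathcal{F(I)}$ with $\lim_{n\in M}x_n=0$; set $A=\mathbb{N}\setminus M\in\mathcal{I}$. For a fixed $j$, every $n\in A_j'\cap M$ satisfies $x_n=1/j$, so ordinary convergence along $M$ forces $A_j'\cap M=A_j'\setminus A$ to be finite; combining this with $A_j\subseteq A_1'\cup\cdots\cup A_j'$ gives that $A_j\setminus A$ is finite for every $j$, which is precisely the $AP$-condition.

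I expect the necessity direction to be the main obstacle. Its heart is the construction of a single test sequence that is at the same time $\mathcal{I}$-convergent and rigid enough that an ordinary limit along some $M\in\mathcal{F(I)}$ compels every $A_j$ to be almost absorbed by the ideal set $\mathbb{N}\setminus M$, together with the bookkeeping that transfers the finiteness estimates from the disjointified family back to the original sets. The sufficiency direction, by contrast, needs only the observation that admissibility of $\mathcal{I}$ makes $M$ infinite, so that the resulting $\mathcal{I^*}$-limit is genuinely taken along an infinite subsequence, and that $\mathcal{F(I)}$ is a bona fide filter.
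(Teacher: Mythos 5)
Your proof is correct: both directions go through, and the argument is essentially the classical one of Kostyrko, Macaj and \v{S}al\'at, which the paper merely states with a citation and does not reprove; your use of the non-disjoint form of the $AP$/$P$-ideal condition is legitimate, since the paper records the two formulations as equivalent. The only cosmetic point is to set $N=0$ when $A_k\setminus B=\emptyset$ in the sufficiency direction.
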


  We will now recall the definition of linear $2$-normed spaces which will play very important role throughout the paper.
   
 \begin{defi}
(\cite{Gahler}) Let $X$ be a real vector space of dimension $d$, where $2\leq d< \infty $. A $2$-norm on $X$ is a function $\norm{. , .} : X\times X \rightarrow \mathbb{R}$ which satisfies the following conditions:\\
(C1) $\norm{x,y}=0$ if and only if $x$ and $y$ are linearly dependent in $X$;\\
(C2) $\norm{x,y} = \norm{y, x}$ for all $x,y$ in $X$;\\
(C3) $\norm{\alpha x,y} =|\alpha |\norm{x,y}$ for all $\alpha$ in $\mathbb{R}$ and for all $x,y$ in $X$;\\
(C4) $\norm{x+y,z} \leq \norm{x,z}+\norm{y,z}$ for all $x,y,z$ in $X$.
 \end{defi}
 
The pair $(X, \norm{. , .})$ is called a linear $2$-normed space. A simple example (\cite{Sarabadan Talebi}) of a linear $2$-normed space is $(\mathbb{R}^2 , \norm{.,.})$ where the equipped $2$-norm is given by $\norm{x,y}=|x_{1}y_{2}-x_{2}y_{1}|, \  x=(x_1 ,x_2),  y=(y_1 , y_2) \in\mathbb{R}^2$.

Let $X$ be a $2$-normed space of dimension $d$, $2\leq d<\infty$. A sequence $\{x_n\}_{n\in \mathbb{N}}$ in  $X$ is said to be convergent (\cite{Arslan Dundar}) to $\xi \in X$ if $\lim_{n \to \infty} \norm{x_n -\xi , z} = 0$,
 for every $z \in X$. In such a case $\xi$ is called limit of $\{x_n\}_{n\in \mathbb{N}}$. The sequence $\{x_n\}_{n\in \mathbb{N}}$ in  $X$ is said to be $\mathcal{I}$-convergent (\cite{Sahiner Gurdal Saltan Gunawan}) to $\xi \in X$ if for each $\varepsilon > 0$ and $z\in X$, the set $A(\varepsilon) = \{n\in \mathbb{N} : \norm{x_n - \xi , z} \geq \varepsilon \}\in \mathcal{I}$. The number $\xi$ is called $\mathcal{I} $- limit of $\{x_n\}_{n\in \mathbb{N}}$.

\section{ \bf {Main Results}}

In this paper we study the concepts of $\mathcal{I}$ and $\mathcal{I^*}$-equal convergence of sequences of functions and investigate relationship between them in linear $2$-normed spaces.
Throughout the paper we propose $X$ as a non empty set and $Y$ as a linear $2$-normed space having dimension $d$ with $2\leq d <\infty$.
\begin{defi}
Let $f , f_n : X\rightarrow Y , n\in\mathbb{N}$. The sequence $\{f_n \}_{n\in\mathbb{N}}$ is said to be equally convergent to $f$ if there exists a sequence $\{\varepsilon _n \}_{n\in\mathbb{N}}$ of positive reals with $\lim _{n\to \infty} \varepsilon _n = 0$ such that for every $x \in X$ there is $m=m(x)\in\mathbb{N}$ with   $\norm{ f_n (x) - f(x) , z } < \varepsilon _n$ for  $n\geq m$ and for every $z \in Y$.
In this case we write $f_n \xrightarrow{e} f$.
\end{defi}

\begin{defi}
Let $f , f_n : X\rightarrow Y , n\in\mathbb{N} $. The  sequence $\{f_n \}_{n\in\mathbb{N}}$ is said to be $\mathcal{I}$-uniformly convergent to $f$ if for any $\varepsilon > 0$ there exists a set  $A\in\mathcal{I}$ such that for all $n\in A^c $ and for all $x \in X , z\in Y$, $\norm{ f_n (x) - f(x) , z } < \varepsilon $.
In this case we write $f_n \xrightarrow{\mathcal{I}-u} f$.
\end{defi}

\begin{defi}
Let $f , f_n : X\rightarrow Y , n\in \mathbb{N}$. Then the the sequence $\{f_n\}_{n\in \mathbb{N}}$ is said to be $\mathcal{I}$-equal convergent to $f$ if there exists a sequence $\{\varepsilon _n\}_{n\in \mathbb{N}}$ of positive reals with $\mathcal{I}\text{-}\lim _{n\to \infty} \varepsilon _n =0$ such that for any $x\in X$ and for any $z\in Y$, the set $\{n\in\mathbb{N}: \norm{f_n (x) - f(x) , z} \geq \varepsilon _n \} \in \mathcal{I}$.
In this case $f$ is called $\mathcal{I}$-equal limit of the sequence $\{f_n\}_{n\in\mathbb{N}}$ and we write $f_n \xrightarrow{\mathcal{I}-e} f$.
\end{defi}
\begin{exmp}
Let $\mathcal{I}$ be a non trivial proper admissible ideal. Let $X=\mathbb{R}^2$ and $Y=\{(a,0): a\in \mathbb{R}\}$. 
Define $f_n(x_1 , x_2 ) =(\frac{1}{n+1}, 0)$ 
and $f(x_1 , x_2) = (0, 0) \ \text{for all} \ (x_1 , x_2) \in \mathbb{R}^2$. Suppose $\varepsilon _n = \frac{1}{n}$. Then $\mathcal{I}\text{-}\lim_{n\to \infty}\varepsilon _n=0$. Here we use the $2$-norm on $\mathbb{R}^2$ by $\norm{x,y}=|x_{1}y_{2}-x_{2}y_{1}|, \  x=(x_1 ,x_2),  y=(y_1 , y_2) \in\mathbb{R}^2$. Now we consider the set $A=\{n\in \mathbb{N}:\norm{f_n(x_1 , x_2)-f(x_1 , x_2),z}\geq \varepsilon _n \}$ for all $z=(y_1, y_2)\in Y$. Then  $A=\{n\in \mathbb{N}:\norm{(\frac{1}{n+1},0)-(0,0), (y_1 , y_2)}\geq \frac{1}{n}\}=\{n\in \mathbb{N}: \frac{y_2}{n+1}\geq \frac{1}{n}\}=\{n\in \mathbb{N}: 0\geq \frac{1}{n}\}=\phi \in \mathcal{I}$, since $y_2=0$. Therefore $f_n \xrightarrow{\mathcal{I}-e} f$. 
\end{exmp}
Now we investigate some arithmetical properties of $\mathcal{I}$-equal convergent sequences of functions.
\begin{thm}
 Let $f , f_n : X\rightarrow Y , n\in\mathbb{N}$. If $f_n \xrightarrow{\mathcal{I}-e} f$  then $f$ is unique. 
\end{thm}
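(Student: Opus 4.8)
The plan is to run a direct triangle–inequality argument, being careful that the "tolerance" sequences attached to the two limits depend on the index $n$. Suppose $\{f_n\}_{n\in\mathbb{N}}$ is $\mathcal{I}$-equal convergent to both $g$ and $h$. By definition there are sequences $\{\varepsilon_n\}_{n\in\mathbb{N}}$ and $\{\delta_n\}_{n\in\mathbb{N}}$ of positive reals with $\mathcal{I}\text{-}\lim_{n\to\infty}\varepsilon_n=0=\mathcal{I}\text{-}\lim_{n\to\infty}\delta_n$ such that for every $x\in X$ and every $z\in Y$ the sets
\[
A=\{n\in\mathbb{N}:\norm{f_n(x)-g(x),z}\geq\varepsilon_n\}\quad\text{and}\quad B=\{n\in\mathbb{N}:\norm{f_n(x)-h(x),z}\geq\delta_n\}
\]
belong to $\mathcal{I}$. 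The goal is to show $g(x)=h(x)$ for each $x\in X$.

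Fix $x\in X$, $z\in Y$, and $\eta>0$. First I would absorb the $\mathcal{I}$-smallness of the tolerances: since $\mathcal{I}\text{-}\lim\varepsilon_n=0$ and $\mathcal{I}\text{-}\lim\delta_n=0$, the sets $\{n:\varepsilon_n\geq\eta/2\}$ and $\{n:\delta_n\geq\eta/2\}$ lie in $\mathcal{I}$, hence so does $C=\{n:\varepsilon_n+\delta_n\geq\eta\}$, being contained in their union. Then $A\cup B\cup C\in\mathcal{I}$, and because $\mathcal{I}$ is a proper admissible ideal, $\mathbb{N}\setminus(A\cup B\cup C)$ is nonempty. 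Choosing any $n$ in this complement and using (C4) together with (C2),
\[
\norm{g(x)-h(x),z}\leq\norm{g(x)-f_n(x),z}+\norm{f_n(x)-h(x),z}<\varepsilon_n+\delta_n<\eta.
\]
As $\eta>0$ was arbitrary, this yields $\norm{g(x)-h(x),z}=0$.

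Finally I would invoke the $2$-norm axioms and the dimension hypothesis. The previous step holds for every $z\in Y$; since $\dim Y=d\geq 2$, pick $z_1,z_2\in Y$ linearly independent. By (C1), $\norm{g(x)-h(x),z_1}=0$ forces $g(x)-h(x)$ to be linearly dependent with $z_1$, and likewise with $z_2$; were $g(x)-h(x)\neq 0$, it would be a scalar multiple of each of $z_1,z_2$, contradicting their linear independence. Hence $g(x)=h(x)$, and since $x\in X$ is arbitrary, $g=h$. I do not expect a serious obstacle here; the one point that needs care is precisely that one cannot simply fix a single index $n$ at the start, because the thresholds $\varepsilon_n,\delta_n$ move with $n$ — the auxiliary set $C$ is introduced exactly to make an admissible choice of $n$ at which both $\varepsilon_n$ and $\delta_n$ are already below $\eta/2$.
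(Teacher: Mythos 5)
Your proof is correct and follows essentially the same route as the paper's: the triangle inequality applied at an index chosen from the (nonempty) complement of a finite union of ideal sets, using that both tolerance sequences are $\mathcal{I}$-null. The only difference is cosmetic — you conclude $\norm{g(x)-h(x),z}=0$ for every $z$ and then invoke (C1) with two linearly independent vectors, whereas the paper argues by contradiction with the specific choice $\varepsilon=\tfrac{1}{2}\norm{f(x)-g(x),z}$ for a $z$ independent of the difference; your finishing step is, if anything, slightly more explicit about the $2$-norm degeneracy.
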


\begin{proof} If possible let $f$ and $g$ be two distinct $\mathcal{I}$-equal limit of $\{f_n\}_{n\in \mathbb{N}}$. Then there are two sequences $\{\varepsilon _n \}_{n\in\mathbb{N}}$ and $\{\gamma _n \}_{n\in\mathbb{N}}$ of positive reals with $\mathcal{I}\text{-}\lim _{n\to \infty} \varepsilon _n =0$ and $\mathcal{I}\text{-}\lim _{n\to \infty} \gamma _n =0$ and for any $x\in X$ and for any $z\in Y$, the sets $K_1 = \{n\in\mathbb{N}: \norm{f_n(x) -f(x) , z }\geq \varepsilon _n \}$,
  $K_2 = \{n\in\mathbb{N}: \norm{f_n(x) -g(x) , z } \geq \gamma _n \} \in\mathcal{I}$. 
Therefore   $ K_1 ^c = \{n\in\mathbb{N}: \norm{f_n(x) -f(x) , z } < \varepsilon _n \}$,
 $K_2 ^c = \{n\in\mathbb{N}: \norm{f_n(x) -g(x) , z } < \gamma _n \} \in\mathcal{F(I)}$.
Let $z\in Y$  be linearly independent with $f(x) - g(x)$. Put $\varepsilon = \frac{1}{2}\norm{f(x) - g(x) , z} > 0 $. As $\mathcal{I}\text{-}\lim _{n\to \infty} \varepsilon _n =0$ and $\mathcal{I}\text{-}\lim _{n\to \infty} \gamma _n =0$, the sets  $K_3 ^c = \{n\in\mathbb{N} : \varepsilon _n < \varepsilon \}, \ K_4 ^c = \{n\in\mathbb{N} : \gamma _n < \varepsilon \} \in\mathcal{F(I)}$. 
As $\phi  \notin\mathcal{F(I)},\  K_1 ^c \cap K_2 ^c \cap K_3 ^c \cap K_4 ^c \neq \phi $. Then there exists $m\in\mathbb{N}$ such that $m\in K_1 ^c \cap K_2 ^c \cap K_3 ^c \cap K_4 ^c  $.
Then $\norm{f_m(x) -f(x) , z } < \varepsilon _m ,\  \norm{f_m(x) -g(x) , z } < \gamma _m ,\ \varepsilon _m < \varepsilon $ and $\gamma _m < \varepsilon $. 
Now 
$   \norm{f(x) - g(x), z} = \norm{f(x) - f_m (x) + f_m(x) - g(x) , z }   \leq \norm{f_m(x) -f(x) , z } +  \norm{f_m(x) -g(x) , z} 
 <\varepsilon _m +\gamma _m 
 < \varepsilon + \varepsilon \\
 = \frac{1}{2}\norm{f(x) - g(x) , z} + \frac{1}{2}\norm{f(x) - g(x), z} 
 = \norm{f(x) - g(x) , z},\ \text{which is absurd}$.
 Hence $\mathcal{I}$-equal limit $f$ of the sequence $\{f_n \}_{n\in\mathbb{N}}$ must be unique if it exists.
 \end{proof}
\begin{thm}
Let $f , f_n : X\rightarrow Y $ and $g , g_n : X\rightarrow Y$, $n\in\mathbb{N}$. If $f_n \xrightarrow{\mathcal{I}-e} f$ and $g_n \xrightarrow{\mathcal{I}-e} g$, $f_n + g_n \xrightarrow{\mathcal{I}-e} f + g$.
\end{thm}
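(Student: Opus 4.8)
The plan is to obtain the sequence of positive reals witnessing the $\mathcal{I}$-equal convergence of $\{f_n+g_n\}_{n\in\mathbb{N}}$ simply by adding the two witnessing sequences provided by the hypotheses, and then to bound the exceptional set by means of the triangle inequality (C4) for the $2$-norm. First I would invoke $f_n \xrightarrow{\mathcal{I}-e} f$ to fix a sequence $\{\varepsilon_n\}_{n\in\mathbb{N}}$ of positive reals with $\mathcal{I}\text{-}\lim_{n\to\infty}\varepsilon_n=0$ such that for every $x\in X$ and every $z\in Y$ the set $K_1 = \{n\in\mathbb{N} : \norm{f_n(x)-f(x),z}\geq\varepsilon_n\}$ lies in $\mathcal{I}$; analogously, from $g_n \xrightarrow{\mathcal{I}-e} g$ I would fix $\{\gamma_n\}_{n\in\mathbb{N}}$ with $\mathcal{I}\text{-}\lim_{n\to\infty}\gamma_n=0$ and $K_2 = \{n\in\mathbb{N} : \norm{g_n(x)-g(x),z}\geq\gamma_n\}\in\mathcal{I}$ for all such $x,z$.

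Next I would set $\delta_n=\varepsilon_n+\gamma_n$ and check that $\mathcal{I}\text{-}\lim_{n\to\infty}\delta_n=0$: given $\varepsilon>0$, the set $\{n:\delta_n\geq\varepsilon\}$ is contained in $\{n:\varepsilon_n\geq\varepsilon/2\}\cup\{n:\gamma_n\geq\varepsilon/2\}$, a union of two members of $\mathcal{I}$, hence itself in $\mathcal{I}$. Then, fixing arbitrary $x\in X$, $z\in Y$, I would use (C4) to write $\norm{(f_n+g_n)(x)-(f+g)(x),z}\leq\norm{f_n(x)-f(x),z}+\norm{g_n(x)-g(x),z}$ for every $n$, so that whenever $n\notin K_1\cup K_2$ we get $\norm{(f_n+g_n)(x)-(f+g)(x),z}<\varepsilon_n+\gamma_n=\delta_n$. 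This yields the key inclusion $\{n\in\mathbb{N}:\norm{(f_n+g_n)(x)-(f+g)(x),z}\geq\delta_n\}\subseteq K_1\cup K_2\in\mathcal{I}$, and since $\mathcal{I}$ is closed under subsets the left-hand set is in $\mathcal{I}$. As $x,z$ were arbitrary, $\{\delta_n\}_{n\in\mathbb{N}}$ witnesses $f_n+g_n\xrightarrow{\mathcal{I}-e}f+g$.

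This argument is essentially routine, and there is no real obstacle: the only points requiring a modicum of care are that the termwise sum of two $\mathcal{I}$-null sequences of reals is again $\mathcal{I}$-null, and the elementary set inclusion $\{a+b\geq\varepsilon+\gamma\}\subseteq\{a\geq\varepsilon\}\cup\{b\geq\gamma\}$ (if $a<\varepsilon$ and $b<\gamma$ then $a+b<\varepsilon+\gamma$). In particular the $AP$-condition and the distinction between $\mathcal{I}$ and $\mathcal{I^*}$ play no role here; only closure of $\mathcal{I}$ under finite unions and subsets is used.
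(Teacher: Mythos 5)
Your proposal is correct and follows essentially the same route as the paper: add the two witnessing sequences, use the triangle inequality (C4) to get the inclusion of the exceptional set in $K_1\cup K_2$, and conclude by closure of $\mathcal{I}$ under finite unions and subsets. You even spell out the verification that $\mathcal{I}\text{-}\lim_{n\to\infty}(\varepsilon_n+\gamma_n)=0$, a step the paper merely asserts.
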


\begin{proof} Since $f_n \xrightarrow{\mathcal{I}-e} f$ and $g_n \xrightarrow{\mathcal{I}-e} g$, there exist  sequences $\{\xi _n \}_{n\in\mathbb{N}}$ and $\{\rho _n \}_{n\in\mathbb{N}}$ of positive reals with $\mathcal{I}\text{-}\lim _{n\to \infty} \xi _n =0$ and $\mathcal{I}\text{-}\lim _{n\to \infty} \rho _n =0$ such that 
 for $x\in X $ and $z\in Y$, we have 
 $A_1 = \{n\in\mathbb{N}: \norm{f_n(x) -f(x) , z } \geq \xi _n \},\ A_2 = \{n\in\mathbb{N}: \norm{g_n(x) -g(x) , z } \geq \rho _n \} \in\mathcal{I}$.
So  $A_1 ^c = \{n\in\mathbb{N}: \norm{f_n(x) -f(x) , z} < \xi _n \}, \ 
   A_2 ^c = \{n\in\mathbb{N}: \norm{g_n(x) -g(x) , z} < \rho _n \} \in\mathcal{F(I)}$.
 As $\phi \notin\mathcal{F(I)} , A_1 ^c \cap A_2 ^c  \neq \phi $. 
Let $n\in A_1 ^c \cap A_2 ^c $ and consider the set 
$ A_3 ^c = \{n\in\mathbb{N}: \norm{f_n(x)+ g_n (x) -\{f(x)+g(x)\} , z} < \xi _n +\rho _n \}$.
As, $ \norm{f_n(x)+ g_n (x) -\{f(x)+g(x)\} , z} 
 \leq \norm{f_n(x) -f(x) , z} + \norm{g_n(x) -g(x) , z}
 <  \xi _n +  \rho _n$. 
Therefore $n\in A_3 ^c$ i.e. $ A_1 ^c \cap A_2 ^c \subset A_3 ^c  $. So $A_3 \subset A_1 \cup A_2$.
Since $A_1 \cup A_2 \in\mathcal{I}$, $A_3 \in\mathcal{I}$.
i.e. $\{ n \in\mathbb{N} : \norm{f_n(x)+ g_n (x) -\{f(x)+g(x)\} , z} \geq   \xi _n +\rho _n \} \in\mathcal{I}$. As $\mathcal{I}\text{-}\lim _{n\to \infty} \xi _n + \rho _n =0$,
 $f_n + g_n \xrightarrow{\mathcal{I}-e} f + g$.
This proves the theorem.
\end{proof}

\begin{thm}
Let $f , f_n : X\rightarrow Y , n\in\mathbb{N}$. Let $a(\neq 0)\in\mathbb{R}$. If $f_n \xrightarrow{\mathcal{I}-e} f$,  $af_n \xrightarrow{\mathcal{I}-e} af$.
\end{thm}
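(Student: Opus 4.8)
The plan is to recycle the witnessing null sequence for $f_n \xrightarrow{\mathcal{I}-e} f$, rescaled by the constant $|a|$. First I would invoke the hypothesis to fix a sequence $\{\varepsilon _n\}_{n\in\mathbb{N}}$ of positive reals with $\mathcal{I}\text{-}\lim_{n\to\infty}\varepsilon _n = 0$ such that for every $x\in X$ and every $z\in Y$ the set $\{n\in\mathbb{N}: \norm{f_n(x) - f(x), z}\geq \varepsilon _n\}$ belongs to $\mathcal{I}$. Since $a\neq 0$, put $\gamma _n = |a|\,\varepsilon _n$; these are again positive reals, and because $\mathcal{I}$-convergence of real sequences is preserved under multiplication by a fixed scalar, $\mathcal{I}\text{-}\lim_{n\to\infty}\gamma _n = 0$.

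Next I would apply the homogeneity axiom $(C3)$ of the $2$-norm: for every $n$, $x$, $z$ one has $\norm{af_n(x) - af(x), z} = \norm{a\,(f_n(x) - f(x)), z} = |a|\,\norm{f_n(x) - f(x), z}$. Since $|a|>0$, the inequality $\norm{af_n(x) - af(x), z}\geq \gamma _n$ holds precisely when $\norm{f_n(x) - f(x), z}\geq \varepsilon _n$, so the sets $\{n\in\mathbb{N}: \norm{af_n(x) - af(x), z}\geq \gamma _n\}$ and $\{n\in\mathbb{N}: \norm{f_n(x) - f(x), z}\geq \varepsilon _n\}$ are identical. The latter lies in $\mathcal{I}$ by hypothesis, hence so does the former.

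Finally, since $\{\gamma _n\}_{n\in\mathbb{N}}$ is a sequence of positive reals with $\mathcal{I}\text{-}\lim_{n\to\infty}\gamma _n = 0$ and, for each $x\in X$ and $z\in Y$, the exceptional set belongs to $\mathcal{I}$, the definition of $\mathcal{I}$-equal convergence is satisfied, i.e. $af_n \xrightarrow{\mathcal{I}-e} af$. There is no genuine obstacle here; the only steps requiring a line of justification are that rescaling an $\mathcal{I}$-null sequence by $|a|$ keeps it $\mathcal{I}$-null, and the observation that the hypothesis $a\neq 0$ is exactly what upgrades the set relation in the second step from an inclusion to an equality (so that no extra control on $\gamma_n$ is needed).
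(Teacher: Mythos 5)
Your proposal is correct and follows essentially the same route as the paper: both arguments rescale the witnessing $\mathcal{I}$-null sequence by $|a|$ and use the homogeneity axiom (C3) to transfer the exceptional set for $\{af_n\}$ back to the one for $\{f_n\}$, which lies in $\mathcal{I}$ by hypothesis. The only cosmetic difference is that you observe the two exceptional sets are equal, while the paper records merely the inclusion $B_2\subset B_1$; this changes nothing in substance.
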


\begin{proof} Since  $f_n \xrightarrow{\mathcal{I}-e} f$, there is a sequence $ \{\beta _n \}_{n\in\mathbb{N}} $ of positive reals with  $\mathcal{I}\text{-}\lim _{n\to \infty} \beta _n =0$ such that for $x\in X , z\in Y$, the set $B_1 = \{n\in\mathbb{N}: \norm{f_n(x) -f(x) , z} \geq \frac{\beta _n}{|a|} \} \in\mathcal{I}$. 
Put $ B_2 = \{n\in\mathbb{N}: \norm{af_n(x) -af(x) , z } \geq \beta _n \}$.
As, $ \norm{af_n(x) -af(x) , z} \geq \beta _n   
\Rightarrow \norm{f_n(x) -f(x) , z} \geq \frac{\beta _n}{|a|}$.
Therefore $B_2 \subset B_1$. So $B_2 \in I$.
This proves the result.
\end{proof}
In \cite{Das Dutta Pal} it has been proved for real valued functions that $\mathcal{I}$-uniform convergence implies $\mathcal{I}$-equal convergence . Now we investigate it in linear $2$-normed spaces which will be needed in the sequel. 
First we give  an important lemma which has been stated as remark in \cite{Sarabadan Talebi}. Here we give a detailed proof for the sake of completeness.

 \begin{lem}(cf.\cite{Sarabadan Talebi})\label{lem1}
 Let $f , f_n : X\rightarrow Y , n\in\mathbb{N}$. Then $ \{f _n \}_{n\in\mathbb{N}} $ is $\mathcal{I}$-uniformly convergent to $f$ if and only if $\{\sup _{x\in X} \norm{f_n(x) -f(x) , z } \}_{n\in\mathbb{N}}$ is $\mathcal{I}$-convergent to zero for all $z\in Y$.
 \end{lem}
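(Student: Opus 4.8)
The plan is to unwind both implications directly from the definitions; the only genuine manoeuvre is the familiar one of separating a strict pointwise estimate from the ``closed'' index set $\{n:\sup_{x\in X}\norm{\cdot}\ge\varepsilon\}$ by working with $\varepsilon/2$. I will fix $z\in Y$ once and for all in each direction, since both the hypothesis and the conclusion carry the clause ``for all $z\in Y$'' and may be checked one $z$ at a time, exactly as with $\mathcal I$-convergence in a $2$-normed space.

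For \emph{necessity}, assume $f_n\xrightarrow{\mathcal I-u}f$ and let $\varepsilon>0$. Applying the definition of $\mathcal I$-uniform convergence with the value $\varepsilon/2$ yields a set $A\in\mathcal I$ such that $\norm{f_n(x)-f(x),z}<\varepsilon/2$ for every $n\in A^c$ and every $x\in X$; taking the supremum over $x\in X$ then gives $\sup_{x\in X}\norm{f_n(x)-f(x),z}\le\varepsilon/2<\varepsilon$ for every $n\in A^c$. Hence $\{n\in\mathbb{N}:\sup_{x\in X}\norm{f_n(x)-f(x),z}\ge\varepsilon\}\subseteq A$, and this index set belongs to $\mathcal I$ because $\mathcal I$ is an ideal (closed under subsets). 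As $\varepsilon>0$ was arbitrary, $\{\sup_{x\in X}\norm{f_n(x)-f(x),z}\}_{n\in\mathbb{N}}$ is $\mathcal I$-convergent to $0$, and as $z$ was arbitrary, this holds for all $z\in Y$.

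For \emph{sufficiency}, assume $\mathcal I\text{-}\lim_{n\to\infty}\sup_{x\in X}\norm{f_n(x)-f(x),z}=0$ for every $z\in Y$. Given $\varepsilon>0$ and $z\in Y$, the hypothesis says precisely that $A:=\{n\in\mathbb{N}:\sup_{x\in X}\norm{f_n(x)-f(x),z}\ge\varepsilon\}\in\mathcal I$; and for each $n\in A^c$ we have $\norm{f_n(x)-f(x),z}\le\sup_{x\in X}\norm{f_n(x)-f(x),z}<\varepsilon$ for all $x\in X$. This is exactly the defining condition of $\mathcal I$-uniform convergence, so $f_n\xrightarrow{\mathcal I-u}f$.

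I expect no real obstacle here: the argument uses none of the $2$-norm axioms (C1)--(C4) beyond the mere fact that $\norm{\cdot,z}$ is a nonnegative real-valued functional, and in particular no triangle inequality is needed. The single point that requires attention is the asymmetry between ``$\norm{f_n(x)-f(x),z}<\varepsilon$ for every $x$'' and ``$\sup_{x\in X}\norm{f_n(x)-f(x),z}<\varepsilon$'', since a strict pointwise bound only survives passage to the supremum as a non-strict one; this is why the necessity direction is run with $\varepsilon/2$ rather than $\varepsilon$. Filling in exactly this gap is what the ``detailed proof for the sake of completeness'' announced before the lemma amounts to.
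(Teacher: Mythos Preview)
Your proof is correct and follows essentially the same route as the paper's: both run the necessity direction with $\varepsilon/2$ so that the pointwise strict bound survives passage to the supremum as $\le\varepsilon/2<\varepsilon$, and both handle sufficiency by the trivial inequality $\norm{f_n(x)-f(x),z}\le\sup_{x\in X}\norm{f_n(x)-f(x),z}$. The only cosmetic difference is that you make explicit the ``one $z$ at a time'' reduction, whereas the paper carries all $z\in Y$ through the argument simultaneously without comment.
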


\begin{proof} 
First we assume that  $ \{f _n \}_{n\in\mathbb{N}} $ is $\mathcal{I}$-uniformly convergent to $f$. Then for any $\varepsilon  > 0$ there exists $M \in\mathcal{I}$ such that for all $n\in M^c$ and for $x\in X$, $z\in Y$, $\norm{f_n(x) -f(x) , z } < \frac{\varepsilon}{2}$.
This implies $\sup _{x\in X} \norm{f_n(x) -f(x) , z } \leq \frac{\varepsilon}{2} < \varepsilon$. 
So the set $\{n\in\mathbb{N} :| \sup _{x\in X} \norm{f_n(x) -f(x) , z } - 0| \geq \varepsilon \} \subset M\in \mathcal{I},  \text{for all} z\in Y$.
Therefore  $\{\sup _{x\in X} \norm{f_n(x) -f(x) , z} \}_{n\in\mathbb{N}}$ is $\mathcal{I}$-convergent to zero for all $z\in Y$.

Conversely assume that  $\{\sup _{x\in X} \norm{f_n(x) -f(x) , z } \}_{n\in \mathbb{N}}$ is $\mathcal{I}$-convergent to zero for all $z\in Y$.
So for any $\varepsilon > 0$ the set $\{n\in\mathbb{N} :| \sup _{x\in X} \norm{f_n(x) -f(x) , z } - 0| \geq \varepsilon \} \in \mathcal{I}$. So there exists $A\in\mathcal{I}$ such that for all $n\in A^c$ and for any $z\in Y$, $\sup _{x\in X} \norm{f_n(x) -f(x) , z} < \varepsilon$.
Therefore $\norm{f_n(x) -f(x) , z } \leq \sup _{x\in X} \norm{f_n(x) -f(x) , z} < \varepsilon$ for all $n\in A^c$.
Hence   $ \{f _n \}_{n\in\mathbb{N}} $ is $\mathcal{I}$-uniformly convergent to $f$ in $Y$. 
\end{proof}

\begin{thm}\label{thm2}
 Let $f , f_n : X\rightarrow Y , n\in\mathbb{N}$. $f_n \xrightarrow{\mathcal{I}-u} f$  implies  $f_n \xrightarrow{\mathcal{I}-e} f$.
\end{thm}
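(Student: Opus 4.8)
The plan is to derive $\mathcal{I}$-equal convergence directly from $\mathcal{I}$-uniform convergence by producing an explicit majorizing sequence $\{\varepsilon_n\}_{n\in\mathbb{N}}$ of positive reals that $\mathcal{I}$-converges to zero. The natural candidate, suggested by Lemma \ref{lem1}, is built from the suprema $s_n(z) := \sup_{x\in X}\norm{f_n(x)-f(x),z}$; but since the definition of $\mathcal{I}$-equal convergence requires a single sequence $\{\varepsilon_n\}$ independent of both $x$ and $z$, I cannot simply take $\varepsilon_n = s_n(z)$. Instead I would fix once and for all the admissibility of $\mathcal{I}$ and exploit that $f_n \xrightarrow{\mathcal{I}-u} f$ gives, for each $k\in\mathbb{N}$, a set $A_k\in\mathcal{I}$ with $\norm{f_n(x)-f(x),z} < \tfrac{1}{k}$ for all $n\in A_k^c$, all $x\in X$, all $z\in Y$.

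First I would set $\varepsilon_n = 1$ for all $n$ lying in $A_1$, and more generally define $\varepsilon_n = \tfrac{1}{k}$ for $n$ in the "first block" $A_k^c$ it belongs to, i.e. using the standard diagonal construction: since the sets $A_1 \supseteq$ (we may assume, after replacing $A_k$ by $A_1\cup\cdots\cup A_k$, a decreasing chain) $A_2 \supseteq \cdots$, put $\varepsilon_n = \tfrac{1}{k}$ whenever $n \in A_k^c \setminus A_{k+1}^c$ and $\varepsilon_n = 1$ if $n\in\bigcap_k A_k$ — but this last set could be large. A cleaner route, and the one I would actually carry out, is: for each $n$ let $k(n)$ be the largest $k\le n$ such that $n \notin A_1\cup\cdots\cup A_k$ (set $k(n)=0$ if no such $k$ exists, and then $\varepsilon_n := 1$), and otherwise set $\varepsilon_n := \tfrac{1}{k(n)}$. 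Then for each fixed $k$, the set $\{n : \varepsilon_n \ge \tfrac1k\} \subseteq (A_1\cup\cdots\cup A_k) \cup \{1,2,\dots,k\}$, which lies in $\mathcal{I}$ because $\mathcal{I}$ is admissible and closed under finite unions; hence $\mathcal{I}\text{-}\lim \varepsilon_n = 0$.

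Next I would verify the equal-convergence inequality. Fix $x\in X$ and $z\in Y$. I must show $\{n : \norm{f_n(x)-f(x),z} \ge \varepsilon_n\}\in\mathcal{I}$. For $n$ with $\varepsilon_n = \tfrac{1}{k(n)}$ and $k(n)\ge 1$, by construction $n\notin A_1\cup\cdots\cup A_{k(n)}$, hence in particular $n\in A_{k(n)}^c$, so by $\mathcal{I}$-uniform convergence $\norm{f_n(x)-f(x),z} < \tfrac{1}{k(n)} = \varepsilon_n$. Thus the only indices $n$ that can violate the inequality are those with $k(n)=0$, i.e. $n\in A_1$ — a set in $\mathcal{I}$. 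Therefore $\{n : \norm{f_n(x)-f(x),z}\ge\varepsilon_n\}\subseteq A_1\in\mathcal{I}$, which is exactly what is needed. Since this holds for every $x$ and every $z$ with the same sequence $\{\varepsilon_n\}$, we conclude $f_n\xrightarrow{\mathcal{I}-e}f$.

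The main obstacle I anticipate is bookkeeping: ensuring the chosen $\{\varepsilon_n\}$ is genuinely independent of $x$ and $z$ (it is, since the $A_k$ from the definition of $\mathcal{I}$-uniform convergence do not depend on $x$ or $z$) and confirming that $\{n:\varepsilon_n\ge 1/k\}$ really lands in $\mathcal{I}$ for every $k$ — which is where admissibility (all finite sets lie in $\mathcal{I}$) and closure under finite unions are both used. Everything else is a short triangle-inequality-free computation. An alternative, perhaps slicker, presentation would invoke Lemma \ref{lem1} to get $\mathcal{I}\text{-}\lim_n s_n(z) = 0$ and then quote a known fact that an $\mathcal{I}$-null sequence of reals is dominated pointwise, off a set in $\mathcal{I}$, by an $\mathcal{I}$-null sequence; but the direct construction above is self-contained and avoids needing such a lemma for each $z$ separately, so I would present the direct argument.
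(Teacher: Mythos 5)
Your argument is correct, and it takes a genuinely different route from the paper. The paper's proof goes through Lemma \ref{lem1}: it sets $u_n=\sup_{x\in X}\norm{f_n(x)-f(x),z}$, fixes a single $\varepsilon>0$, puts $B=\{n\in\mathbb{N}:u_n\ge\varepsilon\}\in\mathcal{I}$, and defines $\xi_n=\frac{1}{n}$ for $n\in B$ and $\xi_n=u_n+\frac{1}{n}$ for $n\notin B$, then verifies $\mathcal{I}\text{-}\lim\xi_n=0$ and that the exceptional set is contained in $B$. You instead bypass the lemma entirely and build the majorizing sequence directly from the definition of $\mathcal{I}$-uniform convergence, taking witness sets $A_k\in\mathcal{I}$ at the levels $\frac{1}{k}$ and defining $\varepsilon_n$ by a diagonal procedure; this buys you a sequence $\{\varepsilon_n\}$ that is manifestly independent of $z$ as well as of $x$ (a point on which the paper's proof is less careful, since its $u_n$, and hence its $\xi_n$, formally depends on $z$), and it avoids the paper's somewhat delicate check that $\{n:\xi_n\ge\varepsilon_1\}\in\mathcal{I}$ for small $\varepsilon_1$. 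One small slip in your bookkeeping: if $k(n)=k$ and $n>k$, then by maximality of $k(n)$ one gets $n\in A_{k+1}$, but $n$ need not lie in $A_1\cup\cdots\cup A_k$, so the correct containment is $\{n:\varepsilon_n\ge\frac{1}{k}\}\subseteq(A_1\cup\cdots\cup A_{k+1})\cup\{1,\dots,k\}$; this set is still in $\mathcal{I}$ by admissibility and closure under finite unions, so $\mathcal{I}\text{-}\lim\varepsilon_n=0$ and the rest of your argument stand unchanged.
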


\begin{proof} Since the sequence $ \{f _n \}_{n\in\mathbb{N}}$ is $\mathcal{I}$-uniformly convergent to $f$ in $Y$, due to the Lemma \ref{lem1} the sequence $\{u_n \}_{n\in\mathbb{N}}$ is $\mathcal{I}$-convergent to zero where $u_n = \sup _{x\in X} \norm{f_n(x) -f(x) , z} $, for all $z\in Y$.
Let $\varepsilon > 0 $ be given. Then the set $B = \{n\in\mathbb{N} : u_n \geq \varepsilon \} \in\mathcal{I}$. 
Define \\
$ \xi _n = \begin{cases}
\frac{1}{n} , & \text{if}  \ n\in B \\
u_n + \frac{1}{n} , & \text{if} \  n\notin B 
\end{cases}$.
We show $\{\xi _n \}_{n\in\mathbb{N}}$ is $\mathcal{I}$-convergent to zero.
For, let $\varepsilon _1 > 0$, we have 
$\{n : \xi _n \geq \varepsilon _1 \} 
= \{n \in B: \xi _n \geq \varepsilon _1 \} \cup  \{n \in B^c: \xi _n \geq \varepsilon _1 \}  
=\{n : \frac{1}{n} \geq \varepsilon _1 \} \cup \{n : u_n + \frac{1}{n} \geq \varepsilon _1 \} 
= M_1 \cup M_2$.
Clearly $M_1$ is finite. 
If $n\in M_2 $ then $n\in B^c$.
So $u_n < \varepsilon$.
Now $u_n + \frac{1}{n} \geq \varepsilon _1 $ if $\frac{1}{n} \geq \varepsilon _1 - u_n$ 
i.e. if   $\frac{1}{n} \geq \varepsilon _1 - \varepsilon$ which is for finite number values of $n$.
Therefore $M_2$ is finite.
 As $\mathcal{I}$ is admissible, $M_1 \cup M_2 \in\mathcal{I}$.
Hence $\mathcal{I}\text{-}\lim _{n \to \infty } \xi _n =0$. 
Now, for all $z\in Y$, we have 
$  \norm{f_n(x) -f(x) , z } 
 \leq  \sup _{x\in X} \norm{f_n(x) -f(x) , z } 
 <  \sup _{x\in X} \norm{f_n(x) -f(x) , z } + \frac{1}{n}
 =  \ u_n + \frac{1}{n} 
 =  \ \xi _n \ \text{if} \ n\in B^c \  \text{where} \ B\in \mathcal{I}$.
Therefore  $\{n\in\mathbb{N} : \norm{f_n(x) -f(x) , z } \geq \xi _ n \} \in\mathcal{I}$. As $\mathcal{I}\text{-}\lim _{n\to \infty } \xi _n =0$, $f_n \xrightarrow{I-e} f$.
Hence the theorem follows.
\end{proof}
Now we intend to proceed with the notion of $I^*$-equal convergence in linear $2$-normed spaces.
\begin{defi}
Let $f , f_n : X\rightarrow Y , n\in\mathbb{N}$. The sequence $\{f_n\}_{n\in\mathbb{N}}$ is said to be $\mathcal{I^*}$-equal convergent to $f$ if there exists a set $M = \{ m_1 < m_2 <  \cdots < m_k \cdots  \} \in\mathcal{F(I)}$ and a sequence $\{\varepsilon _k \}_{k\in M}$ of positive reals with $\lim _{k\to \infty} \varepsilon _k =0 $ such that for every $x\in X$, there is a number $p\in\mathbb{N}$ and for every $z\in Y$,   $\norm{f_{m_k}(x) -f(x) , z } < \varepsilon _k $ for all $k \geq p$.
In this case we write $f_n \xrightarrow{\mathcal{I^*}-e} f$.
\end{defi}

 We proceed to investigate the relationship between $\mathcal{I}$-equal and $\mathcal{I^*}$-equal convergence in linear $2$-normed spaces. 
\begin{thm}\label{thm5}
Let $f , f_n : X\rightarrow Y , n\in\mathbb{N}$. If  $f_n \xrightarrow{\mathcal{I^*}-e}f $ then $f_n \xrightarrow{\mathcal{I}-e} f$. 
\end{thm}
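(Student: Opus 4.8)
The plan is to unwind the definition of $\mathcal{I^*}$-equal convergence and translate the tail behavior along a filter set into membership in the ideal, mimicking the classical argument for $\mathcal{I^*}$-convergence implying $\mathcal{I}$-convergence. First I would assume $f_n \xrightarrow{\mathcal{I^*}-e} f$, so there is a set $M = \{m_1 < m_2 < \cdots\} \in \mathcal{F(I)}$ and a sequence $\{\varepsilon_k\}_{k \in M}$ of positive reals with $\lim_{k\to\infty}\varepsilon_k = 0$ such that for every $x \in X$ there is $p = p(x) \in \mathbb{N}$ with $\norm{f_{m_k}(x) - f(x), z} < \varepsilon_k$ for all $k \geq p$ and all $z \in Y$. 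The natural move is to define a full sequence $\{\delta_n\}_{n\in\mathbb{N}}$ of positive reals by setting $\delta_{m_k} = \varepsilon_k$ for $m_k \in M$ and $\delta_n = 1$ (or any fixed positive constant, or $\tfrac1n$) for $n \notin M$; since $H := \mathbb{N}\setminus M \in \mathcal{I}$, the set $\{n \in \mathbb{N} : \delta_n \geq \varepsilon_1\}$ is contained in $H \cup \{n : \varepsilon_k \geq \varepsilon_1 \text{ with } n = m_k\}$, and the latter is finite because $\varepsilon_k \to 0$; as $\mathcal{I}$ is admissible this union lies in $\mathcal{I}$, so $\mathcal{I}\text{-}\lim_{n\to\infty}\delta_n = 0$.

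Next I would fix $x \in X$ and $z \in Y$ and bound the set $A := \{n \in \mathbb{N} : \norm{f_n(x) - f(x), z} \geq \delta_n\}$. Split $A = (A \cap M) \cup (A \cap H)$. The part $A \cap H$ is contained in $H \in \mathcal{I}$. For $A \cap M$: if $m_k \in A$ then $\norm{f_{m_k}(x) - f(x), z} \geq \delta_{m_k} = \varepsilon_k$, which by the tail estimate forces $k < p(x)$; hence $A \cap M \subseteq \{m_1, \ldots, m_{p(x)-1}\}$, a finite set. Therefore $A \subseteq H \cup \{m_1, \ldots, m_{p(x)-1}\}$, and since $\mathcal{I}$ is admissible and closed under finite unions, $A \in \mathcal{I}$. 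As this holds for every $x \in X$ and every $z \in Y$, and we have already shown $\mathcal{I}\text{-}\lim \delta_n = 0$, the sequence $\{f_n\}_{n\in\mathbb{N}}$ is $\mathcal{I}$-equal convergent to $f$ with witnessing sequence $\{\delta_n\}_{n\in\mathbb{N}}$.

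I do not anticipate a serious obstacle here; this is essentially a bookkeeping argument. The one point to be careful about is that the given sequence $\{\varepsilon_k\}$ is indexed only over $k$ (equivalently over $M$), so one must extend it to all of $\mathbb{N}$ in a way that both preserves $\mathcal{I}$-convergence to zero and still dominates the error terms on $M$; choosing the off-$M$ values to be any fixed positive constant works because those indices already form a set in $\mathcal{I}$. A second minor subtlety is that the number $p$ in the definition depends on $x$, so the finite set $\{m_1,\ldots,m_{p(x)-1}\}$ depends on $x$ as well, but that is harmless since we only need $A \in \mathcal{I}$ for each fixed $x$ separately, exactly as in the definition of $\mathcal{I}$-equal convergence. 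Thus the proof consists of: (i) constructing $\{\delta_n\}$ and checking $\mathcal{I}\text{-}\lim\delta_n=0$; (ii) decomposing $A$ and bounding each piece; (iii) invoking admissibility and the ideal axioms to conclude.
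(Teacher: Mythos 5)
Your proposal is correct and takes essentially the same route as the paper: unwind the definition of $\mathcal{I^*}$-equal convergence and observe that the exceptional set $\{n : \norm{f_n(x)-f(x),z}\geq \delta_n\}$ is contained in $(\mathbb{N}\setminus M)\cup\{m_1,\ldots,m_{p(x)}\}$, which lies in $\mathcal{I}$ by admissibility. If anything, your version is more careful than the paper's, since you explicitly extend $\{\varepsilon_k\}_{k\in M}$ to a sequence $\{\delta_n\}_{n\in\mathbb{N}}$ and verify $\mathcal{I}\text{-}\lim_{n\to\infty}\delta_n=0$, a bookkeeping step the paper passes over by writing $\varepsilon_n$ for indices $n$ at which the sequence was never defined.
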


\begin{proof} We assume $f_n \xrightarrow{\mathcal{I^*}-e}f$. Then there exist a set  $M = \{ m_1 < m_2 <  \cdots < m_k \cdots  \} \in \mathcal{F(I)}$ and a sequence $\{\varepsilon _k \}_{k\in M}$ of positive reals with $\lim _{k\to \infty} \varepsilon _k =0 $ such that for every $x\in X$, there is a number $p\in\mathbb{N}$ and for every $z\in Y$,   $\norm{f_{m_k}(x) -f(x) , z } < \varepsilon _k $ for  $k > p$.\\
Then clearly  $\norm{f_n (x) -f(x) , z} \geq  \varepsilon _n $ holds for $n\in (N\setminus M) \cup \{ m_1 , m_2 ,  \cdots , m_p   \} $. This implies $\{n : \norm{f_n (x) -f(x) , z } \geq  \varepsilon _n \} \subset  (\mathbb{N}\setminus M) \cup \{ m_1 , m_2 ,  \cdots , m_p \}$.
Since $\mathcal{I}$ is admissible, $\{n : \norm{f_n (x) -f(x) , z } \geq  \varepsilon _n \} \in \mathcal{I}$. 
Hence $f_n \xrightarrow{\mathcal{I}-e} f$.
\end{proof}

\begin{rem}
The converse of the above theorem may not hold in general as shown by the following example.
\end{rem}

\begin{exmp}
Consider a decomposition $\mathbb{N} = \bigcup _{i=1} ^{\infty} 2^{j-1} (2s-1) $.
Let $\mathcal{I}$ be the class of all subsets of $\mathbb{N}$ which intersects finite umber of $D_i ^{'}s$. Then $\mathcal{I}$ is an admissible ideal.
Let $f , f_n : X\rightarrow Y , n\in\mathbb{N}$ such that $\{f_n \}_{n\in  N}$ is uniformly convergent to $f$. Then for each $\varepsilon > 0$ there exists $p\in\mathbb{N}$ such that for all $x\in X , z\in Y $, $\norm{f_n (x) -f(x) , z } < \varepsilon$ for all $n> p$.
Define a sequence $\{g_n \}_{n\in\mathbb{N}}$ by $g_n = f_j \ \text{if} \ n\in D_j$.  Then for all $x\in X, z\in Y$ the set $\{ n \in\mathbb{N} :\norm{g_n (x) -f(x) , z} \geq  \varepsilon  \} \subset D_1 \cup D_2 \cup \cdots \cup D_p $.  Therefore $\{ n \in\mathbb{N} :\norm{g_n (x) -f(x) , z } \geq  \varepsilon  \} \in\mathcal{I}$. Hence  $g_n \xrightarrow{\mathcal{I}-u} f$.
By the Theorem \ref{thm2},  $g_n \xrightarrow{\mathcal{I}-e} f$.\\
Now we shall show that $\{g_n\}_{n\in\mathbb{N}}$ is not $\mathcal{I^*}$-equal convergent in $Y$. 
If possible let  $g_n \xrightarrow{\mathcal{I^*}-e} f$.
Now, by definition, if $H\in\mathcal{I}$, then there is a $p\in \mathbb{N}$ such that $H\subset D_1 \cup D_2 \cup \cdots \cup D_p $. Then $D_{P+1} \subset \mathbb{N}\setminus H$ and so we have $g_{m_k} = f_{p+1}$ for infinitely many of $ k^{'} s$.
Let $z\in Y$ be  linearly independent with $f_{p+1} - f(x)$. 
Now we have $\lim _{n\to \infty} \norm{g_{m_k} (x) - f(x) , z} 
= \norm{f_{p+1} (x) - f(x) , z } 
\neq 0 $.
Which shows that  $\{g_n \}_{n\in \mathbb{N}}$ is not $\mathcal{I^*}$-equal convergent in $Y$.
\end{exmp}
Now we see, if $\mathcal{I}$ satisfies the condition $AP$ and $X$ is countable then the converse of the Theorem \ref{thm5} also holds. In the next theorem we  investigate  whether the two concepts  $f_n \xrightarrow{\mathcal{I}-e} f$  and  $f_n \xrightarrow{\mathcal{I^*}-e} f$  coincide in linear $2$-normed spaces when  $\mathcal{I}$ is a $P$-ideal.

\begin{thm}
Let $f , f_n : X\rightarrow Y , n\in\mathbb{N}$ and $X$ is a countable set. Then  $f_n \xrightarrow{\mathcal{I}-e} f$ implies $f_n \xrightarrow{\mathcal{I^*}-e} f$  whenever $\mathcal{I}$ is a $P$-ideal.
\end{thm}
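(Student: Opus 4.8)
The plan is to mimic the classical proof that $\mathcal I$-convergence implies $\mathcal I^*$-convergence under the $AP$-condition (Theorem \ref{thm1}), but carried out uniformly over the countably many points of $X$. First I would unpack the hypothesis $f_n \xrightarrow{\mathcal I-e} f$: there is a sequence $\{\varepsilon_n\}_{n\in\mathbb N}$ of positive reals with $\mathcal I\text{-}\lim \varepsilon_n = 0$ such that for every $x\in X$ and every $z\in Y$ the set $A(x,z)=\{n\in\mathbb N : \norm{f_n(x)-f(x),z}\geq\varepsilon_n\}$ lies in $\mathcal I$. Since $\mathcal I\text{-}\lim\varepsilon_n=0$, the sets $E_t=\{n : \varepsilon_n\geq 1/t\}$ also lie in $\mathcal I$ for each $t\in\mathbb N$.

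Next I would enumerate $X=\{x^{(1)},x^{(2)},\dots\}$ (if $X$ is finite the argument is easier, essentially a finite intersection, so assume it is countably infinite). The subtlety is that $z\in Y$ ranges over an uncountable set, so I cannot directly take a countable family of "bad" sets indexed by pairs $(x,z)$. To handle this I would fix a basis $\{z_1,\dots,z_d\}$ of $Y$ and observe, using the $2$-norm axioms (C2)--(C4), that $\norm{w,z}$ for arbitrary $z=\sum \alpha_i z_i$ is controlled by $\max_i \norm{w,z_i}$ up to a constant depending only on the $\alpha_i$; more to the point, $\norm{w,z_i}$ small for all $i$ in the basis forces $\norm{w,z}$ small for all $z$. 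So it suffices to control the countable family of sets $A_{j,i}:=A(x^{(j)},z_i)$, $j\in\mathbb N$, $1\leq i\leq d$, together with the sets $E_t$. Re-index this countable collection of $\mathcal I$-sets as $A_1,A_2,\dots$.

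Now apply the $P$-ideal property: there exists $A\in\mathcal I$ with $A_n\setminus A$ finite for every $n$. Set $M=\mathbb N\setminus A\in\mathcal F(\mathcal I)$ and write $M=\{m_1<m_2<\cdots\}$. I would then define the candidate error sequence: for $k\in M$ put $\delta_k=\varepsilon_k$. I claim $\lim_{k\to\infty}\delta_k=0$ along $M$ (ordinary limit): given $t$, since $E_t\setminus A$ is finite, all sufficiently large elements of $M$ avoid $E_t$, hence satisfy $\varepsilon_k<1/t$. Similarly, for a fixed $x=x^{(j)}$ and fixed $z_i$, since $A_{j,i}\setminus A$ is finite, all large enough $m_k\in M$ satisfy $\norm{f_{m_k}(x^{(j)})-f(x^{(j)}),z_i}<\varepsilon_{m_k}=\delta_k$; taking the maximum over the finitely many $i$ gives a single threshold $p=p(x^{(j)})$ beyond which $\norm{f_{m_k}(x)-f(x),z_i}<\delta_k$ for all basis vectors, and then the $2$-norm estimate promotes this to $\norm{f_{m_k}(x)-f(x),z}<\delta_k$ (possibly after absorbing the basis constant into the sequence, i.e. replacing $\delta_k$ by a fixed constant multiple, which still tends to $0$) for every $z\in Y$. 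This is exactly the definition of $f_n\xrightarrow{\mathcal I^*-e} f$ with witnessing set $M$ and error sequence $\{\delta_k\}_{k\in M}$.

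The main obstacle I anticipate is the interplay between the uncountability of the directions $z$ and the countability of $X$: the naive attempt to index bad sets by $(x,z)$ fails, and one must reduce to a finite basis of $Y$ and argue that controlling $\norm{\cdot,z_i}$ on the basis controls $\norm{\cdot,z}$ for all $z$, with a constant independent of $n$. Care is also needed that the constant coming from this basis comparison does not depend on $x$ or $k$, so that rescaling the error sequence keeps it convergent to zero; this should follow because the comparison constant depends only on the coordinates of $z$ relative to the fixed basis, not on the point or the index. Everything else is the standard $P$-ideal diagonalization plus bookkeeping of finitely many exceptional indices.
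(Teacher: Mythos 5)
Your overall diagonalization is exactly the paper's: enumerate $X=\{x_1,x_2,\dots\}$, feed a countable family of bad sets (together with control of the scale sequence; you use the sets $E_t=\{n:\varepsilon_n\geq 1/t\}$, the paper instead invokes Theorem \ref{thm1} to get a set $H$ along which $\sigma_n\to 0$ — the same device) into the $P$-ideal property, take $M=\mathbb{N}\setminus A\in\mathcal{F(I)}$, and accept finitely many exceptional indices per point of $X$. Where you depart from the paper is the basis-reduction step you add to tame the uncountably many directions $z\in Y$, and that step contains a genuine gap.

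From $\norm{w,z_i}<\delta_k$ for the basis vectors $z_1,\dots,z_d$ you only get, for $z=\sum_i\alpha_i z_i$, the bound $\norm{w,z}\leq\sum_i|\alpha_i|\,\norm{w,z_i}<\bigl(\sum_i|\alpha_i|\bigr)\delta_k$, and the factor $\sum_i|\alpha_i|$ is unbounded as $z$ ranges over $Y$ (already $\norm{w,tz_1}=t\norm{w,z_1}$). So it cannot be absorbed by replacing $\delta_k$ with a fixed constant multiple: for any constant $C$ the rescaled inequality fails for directions with $\sum_i|\alpha_i|>C$ unless $\norm{w,z_i}=0$ for every $i$. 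In fact, demanding $\norm{f_{m_k}(x)-f(x),z}<\delta_k$ for a fixed $k$ and \emph{every} $z\in Y$ forces, by scaling $z$ and using (C1), that $f_{m_k}(x)=f(x)$; so the conclusion you assert at the promotion step is strictly stronger than anything the hypothesis can give. Any repair must let the threshold (or the rescaling) depend on $z$ as well as on $x$ — for instance pass to $\delta_k'=\sqrt{\delta_k}$ and note $\bigl(\sum_i|\alpha_i|\bigr)\delta_k<\delta_k'$ for all sufficiently large $k$, the meaning of sufficiently large depending on $z$ — i.e. the $z$-quantifier in the definition of $\mathcal{I^*}$-equal convergence has to be treated pointwise in $z$, not uniformly. (For what it is worth, the paper's own proof never confronts this: it simply takes the set $B(x)$ coming from the hypothesis to be independent of $z$. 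Your instinct that the uncountable family of directions needs care is sound, but the constant-absorption argument does not close it.)
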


\begin{proof}
 From the given condition there exists a sequence $\{\sigma _n \}_{n\in\mathbb{N}}$ of positive reals with \\ $\mathcal{I}\text{-}\lim _{n\to \infty } \sigma _n = 0$ and for every $z\in Y$ and for each $x\in X$, there is a set $B = B(x) \in\mathcal{F(I)}$, $\norm{f_n (x) -f(x) , z} < \sigma _n$  for all $n\in B$. Now by Theorem \ref{thm1}, $\mathcal{I^*}\text{-}\lim _{n\to \infty } \sigma _n = 0$. So we will get a set $H\in\mathcal{F(I)}$ for which $\{\sigma _n \}_{n\in H}$ is convergent to zero. Since  $X$ is countable, let us enumerate the elements of $X$ as follows: $X=\{x_1 , x_2 , \cdots \}$ and so for each element $x_i$ of it and for every $z\in Y$, there is a set $B_i = B(x_i ) \in\mathcal{F(I)}$, we have $\norm{f_n (x_i) -f(x_i) , z} < \sigma _n$ for all $n\in B_i$. $\mathcal{\mathcal{I}}$-being a $P$-ideal, there is a set $A\in\mathcal{F(I)}$ such that $A\setminus B_i$ is finite for all $i$. So for all $z\in Y$ and for all $n\in A\cap H$ except for finite number of values, we have $\norm{f_n (x) -f(x) , z } < \sigma _n $. 
Therefore  $f_n \xrightarrow{\mathcal{I^*}-e} f$.  
Hence the theorem follows. 
\end{proof}
\begin{thm}
Let $f , f_n : X\rightarrow Y , n\in\mathbb{N}$. Suppose that  $f_n \xrightarrow{\mathcal{I}-e} f$ implies $f_n \xrightarrow{\mathcal{I^*}-e}f$. Then $\mathcal{I}$ satisfies the condition $AP$.
\end{thm}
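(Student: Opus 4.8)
The plan is to verify the $AP$ condition directly from the hypothesis by testing it against a carefully chosen sequence of functions defined on a one-point domain. So I would start with an arbitrary sequence $\{A_j\}_{j\in\mathbb{N}}$ of mutually disjoint sets belonging to $\mathcal{I}$, with the goal of producing the sets $B_j$ demanded by the $AP$ condition. Since the hypothesis quantifies over \emph{all} $f,f_n:X\to Y$, I am free to take $X=\{x_0\}$ a singleton; then functions $X\to Y$ are just points of $Y$, and both $\mathcal{I}$-equal and $\mathcal{I^*}$-equal convergence collapse to statements about a single sequence in $Y$.

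Next I would fix linearly independent vectors $v,w\in Y$ (possible because $\dim Y\ge 2$) and set $c=\norm{v,w}>0$. Define $f(x_0)=\theta$ (the zero element of $Y$), $f_n(x_0)=\frac{1}{j}\,v$ whenever $n\in A_j$ for some $j$, and $f_n(x_0)=\theta$ when $n\notin\bigcup_j A_j$; and set $\varepsilon_n=1/\sqrt{j}$ for $n\in A_j$ and $\varepsilon_n=1/n$ otherwise. The first task is to check that $f_n\xrightarrow{\mathcal{I}-e}f$. One verifies $\mathcal{I}\text{-}\lim_{n\to\infty}\varepsilon_n=0$ (each level set $\{n:\varepsilon_n\ge\varepsilon\}$ is a finite union of some $A_j$'s together with a finite set, hence in $\mathcal{I}$), and for each fixed $z\in Y$ one computes, using (C3), that $\{n:\norm{f_n(x_0)-f(x_0),z}\ge\varepsilon_n\}=\bigcup\{A_j: j\le\norm{v,z}^2\}\in\mathcal{I}$. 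The delicate point here is that one fixed, $z$-independent sequence $\{\varepsilon_n\}$ must dominate the quantities $\frac{1}{j}\norm{v,z}$ for every $z$ separately even though $\norm{v,z}$ is unbounded over $z$; this is exactly what forces $j\varepsilon_n\to\infty$, for which $\varepsilon_n=1/\sqrt{j}$ is the simplest choice.

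Then I would invoke the hypothesis to get $f_n\xrightarrow{\mathcal{I^*}-e}f$, so there is $M=\{m_1<m_2<\cdots\}\in\mathcal{F(I)}$ along which $\norm{f_{m_k}(x_0)-f(x_0),z}\to 0$ as $k\to\infty$ for every $z$; taking $z=w$ yields $c/j\to 0$ along those $k$ with $m_k\in A_j$. Consequently, for each $N$ only finitely many $m_k$ lie in $A_1\cup\cdots\cup A_N$, so $M\cap A_j$ is finite for every $j$. Setting $H=\mathbb{N}\setminus M\in\mathcal{I}$ and $B_j=A_j\setminus M=A_j\cap H$, each $B_j\subseteq H$ lies in $\mathcal{I}$, $A_j\triangle B_j=A_j\cap M$ is finite, and $\bigcup_j B_j=\bigl(\bigcup_j A_j\bigr)\cap H\subseteq H\in\mathcal{I}$; thus $\{B_j\}_{j\in\mathbb{N}}$ witnesses the $AP$ condition. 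I expect the main obstacle to be bookkeeping rather than logic: confirming that the chosen $\{\varepsilon_n\}$ genuinely witnesses $\mathcal{I}$-equal convergence uniformly in $z$, and cleanly extracting the finiteness of each $M\cap A_j$ from the $\mathcal{I^*}$-equal convergence along $M$ (together with handling the case $\bigcup_j A_j\neq\mathbb{N}$, which is why the auxiliary value $\varepsilon_n=1/n$ is used off the $A_j$'s).
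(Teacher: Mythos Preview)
Your argument is correct and follows the same overall strategy as the paper: encode the disjoint family $\{A_j\}$ into a test sequence of functions, verify $\mathcal{I}$-equal convergence, apply the hypothesis to obtain a set $M\in\mathcal{F(I)}$, and read off the $AP$ condition from the finiteness of each $A_j\cap M$.  The implementations differ, however.  The paper starts from an (assumed) uniformly convergent sequence $\{f_n\}\to f$, sets $h_n=f_j$ on $M_j$ and $h_n=f$ elsewhere, shows $h_n\xrightarrow{\mathcal{I}-u}f$ and then invokes Theorem~\ref{thm2} to get $h_n\xrightarrow{\mathcal{I}-e}f$; only at the end does it extract $M_j\cap(\mathbb{N}\setminus B)$ finite from equal convergence along $H$.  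You instead build everything explicitly: $f\equiv\theta$, $f_n\equiv\tfrac{1}{j}v$ on $A_j$, and a concrete control sequence $\varepsilon_n=1/\sqrt{j}$ (resp.\ $1/n$), verifying $\mathcal{I}$-equal convergence directly without passing through uniform convergence.  This buys self-containment and avoids the paper's tacit need for a nonconstant uniformly convergent sequence with $f_j\neq f$; the price is the extra computation that $\varepsilon_n=1/\sqrt{j}$ really works uniformly in $z$, which you handle correctly.

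One small point of formulation: in the paper $X$ is a fixed nonempty set (declared once for the whole section), so the hypothesis quantifies over all $f,f_n:X\to Y$ for that fixed $X$, not over all domains.  Your sentence ``I am free to take $X=\{x_0\}$ a singleton'' is therefore not literally justified.  The fix is trivial and already implicit in what you wrote: define $f$ and each $f_n$ as \emph{constant} functions on the given $X$ (with the values you specify), so that the argument is independent of $x$ and reduces to the single-point analysis you carry out.  With that adjustment your proof goes through as stated.
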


\begin{proof} Let $f , f_n : X\rightarrow Y , n\in\mathbb{N}$ such that $\{f_n \}_{n\in\mathbb{N}}$ is uniformly convergent to $f$. Then for each $\varepsilon > 0$ there exists $p\in\mathbb{N}$ such that for all $x\in X , z\in Y$ , $\norm{f_n (x) -f(x) , z} < \varepsilon $ for all $n > p$.
Suppose $\{ M_1 , M_2 , \cdots \}$ be a class of mutually disjoint non empty sets from $\mathcal{I}$. 
Define a sequence $\{h_n\}_{n\in\mathbb{N}}$ by 
$h_n = \begin{cases}
f_j , & \text{if} \ n\in M_j \\
f , & \text{if}  \ n\in\mathbb{N}\setminus \bigcup _{j} M_j 
\end{cases}$.
First of all we shall show that $h_n \xrightarrow{\mathcal{I}-u} f$.
Let $\varepsilon > 0 $ be given. Observe that the set  $M = M_1 \cup M_2 \cup \ldots \cup M_p \in\mathcal{I}$ and for all $x\in X , z\in Y$, we have
 $\norm{h_n (x) -f(x) , z } < \varepsilon$ for all $n\in M^c$. 
  i.e.  $\{ n\in\mathbb{N} : \norm{h_n (x) -f(x) , z } \geq  \varepsilon \} \subset M_1 \cup M_2 \cup \cdots \cup M_p\in\mathcal{I}$.  
Therefore $h_n \xrightarrow{\mathcal{I}-u} f$. 
By the Theorem \ref{thm2} we have   $h_n \xrightarrow{\mathcal{I}-e} f$.
So by the given condition $h_n \xrightarrow{\mathcal{I^*}-e} f$.
Therefore there is a set $B\in\mathcal{I}$ such that 
\begin{equation}\label{eqn1}
H =\mathbb{N}\setminus B = \{ a_1 < a_2 < \cdots < a_k < \cdots \}\in\mathcal{F(I)}  \   \text{and} \ h_{a_k} \xrightarrow{e}f.    
\end{equation} 
Put $B_j = M_j \cap B \ (j = 1, 2, \cdots ) $. So $\{B_1 , B_2 ,\cdots \}$ is a class of sets belonging to $\mathcal{I}$. Now $\bigcup _{j = 1} ^{\infty} B_j = \bigcup _{j=1} ^{\infty} (M_j \cap B)= (B \cap \{\bigcup _{j=1} ^{\infty} M_j \} \subset B$.
Since $B \in\mathcal{I}$ it follows $\bigcup _{j = 1} ^{\infty} B_j \in\mathcal{I
}$.
Now from the equation \ref{eqn1} we see that the set $M_j$ has a finite number of elements common with the set $\mathbb{N}\setminus B$. So $M_j \triangle B_j \subset M_j \cap (\mathbb{N}\setminus B) $. Therefore $M_j \triangle B_j$ is finite. Therefore $\mathcal{I}$ satisfies the condition $AP$.
\end{proof}

\textbf{Acknowledgement.}
The second author is grateful to The Council of  Scientific  and Industrial Research, HRDG, India, for the grant of Junior Research Fellowship during the preperation of this paper.

\end{document}